\newfont{\bbb}{msbm10 scaled\magstep 1}
\newtheorem{thm}{Theorem}
\newtheorem{lem}{Lemma}
\newtheorem{pro}{Proposition}
\begin{document}

\title{Covering a reduced spherical body by a disk}

\author{ Micha{\l} Musielak}

\address{
University of Science and Technology\\
Kaliskiego 7, 85-789, Bydgoszcz, Poland}

\email{michal.musielak@utp.edu.pl}

\subjclass{Primary 52A55}

\keywords{spherical convex body, spherical geometry, hemisphere, lune, width, 
thickness, disk}

\date{}

\begin{abstract}
\noindent
In this paper, the following two theorems are proved:  $(1)$ every spherical convex body $W$ of constant width $\Delta (W) \geq \frac{\pi}{2}$ may be covered by a disk of radius $\Delta(W) + \arcsin \left( \frac{2\sqrt{3}}{3} \cdot \cos \frac{\Delta(W)}{2}\right) - \frac{\pi}{2}$; $(2)$ every  reduced spherical convex body $R$ of thickness $\Delta(R)<\frac{\pi}{2}$  may be covered by a disk of radius $\arctan \left( \sqrt{2} \cdot \tan \frac{\Delta(R)}{2}\right)$.  
\end{abstract}

\maketitle

\section{Introduction}

The subject of reduced bodies in the Euclidean space $E^d$ has been well researched, see for instance \cite{LM}. 
However, it is a natural idea to investigate such bodies in non-Euclidean geometries. 
There are number of articles describing reduced bodies on the sphere, see: \cite{L2}, \cite{HN}, \cite{LMu}, \cite{LMu2}. 
The question arises which of the results achieved in the Euclidean space can be transferred to the sphere. 
In particular, the main theorem of this paper presents the spherical version of the variant of Jung theorem for reduced bodies in $E^2$ given in \cite{L3}.

\smallskip

Let $S^2$ be the unit sphere of the three-dimensional Euclidean space $E^{3}$. 
By a \emph{great circle} of $S^2$ we mean the intersection of $S^2$ with any two-dimensional subspace of $E^{3}$. 
The set of points of a great circle of $S^2$ in the distance at most $\frac{\pi}{2}$ from a point $c$ of this great circle is called a \emph{semicircle} with \emph{center} $c$. 
Any pair of points obtained as the intersection of $S^2$ with a one-dimensional subspace of $E^{3}$ is called a pair of \emph{antipodes}.
Note that if two different points $a,b$ are not antipodes, there is exactly one great circle containing them.
The shorter part of this great circle is called the \emph{spherical arc connecting $a$ and $b$}, or shortly \emph{arc}.  
It is denoted $ab$.
By the \emph{spherical distance} $|ab|$, or shortly \emph{distance}, of 
these points we mean the length of the arc $ab$.
If $a,b$ are antipodes, we put $|ab|=\pi$. 
If $p$ is a point of $S^2$ and $F$ is a closed set containing at least two points, then we define $\textrm{dist} (p, F)$ as $\min_{q\in F}  |pq|$.

A subset of $S^2$ is called \emph{convex} if it does not contain any pair of antipodes of $S^2$ and if together with every two points it contains the arc connecting them.
By a \emph{spherical convex body} we mean a closed convex set with non-empty interior.
If there is no arc in the boundary of a spherical convex body, we call the body \emph{strictly convex}.

Let $\rho \in \left(0, \frac{\pi}{2}\right]$.
By \emph{disk} of \emph{radius} $\rho$ and \emph{center} $c$ we mean the set of points of $S^2$ in the distance at most $\rho$ from a point $c \in S^2$.
The boundary of a disk is called a \emph{spherical circle}. 
By a \emph{hemisphere} we mean any disk of radius $\frac{\pi}{2}$.
The hemisphere with center $p$ is denoted by $H(p)$.
If $p,q$ is are antipodes, then $H(p)$ and $H(q)$ are called \emph{opposite hemispheres}.
Let $t$ be a boundary point of a convex body $C \subset S^2$.
We say that a hemisphere $H$ \emph{supports} $C$ at $t$ if $C \subset H$ and $t$ belongs to the great circle bounding $H$.
If the body $C$ is supported at $p$ by exactly one hemisphere, we call $p$ \emph{a smooth point} of the boundary of $C$. 
If all boundary points of $C$ are smooth, we say that $C$ is \emph{smooth}.
We say that $e$ is an \emph{extreme point} of $C$ if $C\setminus \{ e\}$ is a convex set.
If the set $A$ is contained in an open hemisphere, we define $\textrm{conv} (A) $ as a smallest convex set containing $A$ (for details see definition before Lemma 1 in \cite{L2})
 
If hemispheres $G$ and $H$ are different and not opposite, then $L = G \cap H$ is called a \emph{lune}. 
The two semicircles bounding $L$ and contained in $G$ and $H$, respectively, are denoted by $G/H$ and $H/G$. 
The \emph{thickness} $\Delta (L)$ of $L \subset S^2$ is defined as the distance of the centers of $G/H$ and $H/G$.  
By the \emph{corners} of $L$ we understand the two points of the set $(G/H )\cap (H/G)$.

For every hemisphere $K$ supporting a convex body $C\subset S^2$ we find hemispheres $K^*$ supporting $C$ such that the lunes $K\cap K^*$ are of the minimum thickness (by compactness arguments at least one such a hemisphere $K^*$ exists).
The thickness of the lune $K \cap K^*$ is called \emph{the width of $C$ determined by} $K$ and it is denoted by ${\rm width}_K (C)$ (see \cite{L2}).
If for all hemispheres $K$ supporting $C$ the numbers ${\rm width}_K (C)$ are equal, we say that $C$ is {\it of constant width} (see \cite{L2}).
By the \emph{thickness} $\Delta (C)$ of a convex body $C \subset S^2$ we understand the minimum width of $C$ determined by $K$ over all supporting hemispheres $K$ of $C$ (see \cite{L2}).

After \cite{L2} we call a spherical convex body $R \subset S^2$ \emph{reduced} if $\Delta (Z) < \Delta (R)$ for every convex body $Z \subset R$ different from $R$. 
Simple examples of reduced spherical convex bodies on $S^2$ are spherical bodies of constant width and, in particular, the disks on $S^2$.
Also each of the four parts of a spherical disk  on $S^2$ dissected by two orthogonal great circles through the center of the disk is a reduced spherical body.
It is called a \emph{quarter of a spherical disk}.

\medskip

Note: most of the above notions can be defined also in higher dimensions, for details see for instance \cite{L2}. 

\medskip

For the convenience of the reader recall a few formulas of spherical geometry which are frequently applied in this paper. 
Consider the right spherical triangle with hypotenuse $C$ and legs $A,B$. 
Denote by $\alpha$ the angle opposite to $A$ and by $\beta$ the angle opposite to  $B$.
By \cite{Mur} the following formulas hold true
\begin{equation}\label{cos}
\tan A = \cos \beta \tan C
\end{equation}
\begin{equation}\label{sin}
\sin A = \sin \alpha \sin C
\end{equation}
\begin{equation}\label{Pyth}
\cos C = \cos A \cos B
\end{equation}
\begin{equation}\label{cot}
\cos C = \cot \alpha \cot \beta  .
\end{equation} 

\medskip

After \cite{De} we define the \emph{circumradius} of a convex body $C$ as the smallest $\rho$ such that $C$ can be covered by a disk of radius $\rho$.
Such a disk is unique and is called the \emph{disk circumscribed about $C$}. 
The boundary of this disk is called the \emph{circle circumscribed about $C$}. 
In particular the circle circumsribed on a spherical triangle contains all vertices of this triangle, which is a consequence of the spherical Ceva's theorem (see: Theorem 2 of \cite{Ma}).

\smallskip

Determining the circumradius in the three special cases presented in the below Lemmas \ref{quart}-\ref{equi} is useful in the further part of the paper.

\begin{lem}\label{quart}
Let $Q \subset S^2$ be a quarter of disk. The circumradius of $Q$ is equal $\rho = \arctan \left( \sqrt{2} \cdot \tan \frac{\Delta(Q)}{2}\right)$.
\end{lem}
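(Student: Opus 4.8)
The plan is to place the quarter $Q$ in convenient coordinates, to determine its thickness explicitly, and then to reduce the computation of the circumradius to a single right spherical triangle. So let $Q$ be cut from a disk of radius $r$ centred at a point $o$ by two orthogonal great circles through $o$, so that $Q$ is bounded by two arcs $oa$ and $ob$ lying on these great circles, with $|oa|=|ob|=r$ and $\angle aob=\frac{\pi}{2}$, together with the arc $ab$ of the bounding circle of the disk. The body $Q$ is symmetric with respect to the great circle $\beta$ bisecting the right angle at $o$; write $m$ for the midpoint of the arc $ab$, which lies on $\beta$ at distance $r$ from $o$.

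The first substantial step is to show that $\Delta(Q)=r$. I would exhibit the supporting lune whose two bounding great circles are perpendicular to $\beta$, one passing through $o$ and the other tangent to the arc $ab$ at $m$. Since both great circles are orthogonal to $\beta$, they meet at the two poles of $\beta$, and the centres of the two semicircles bounding the lune are exactly $o$ and $m$; hence this lune has thickness $|om|=r$. The harder part is minimality: I must check that no supporting lune of $Q$ is thinner. Using that the thickness of a lune $G\cap H$ equals $\pi$ minus the spherical distance between the centres of $G$ and $H$, it suffices to prove that the distance between the centres of any two supporting hemispheres of $Q$ is at most $\pi-r$. I would describe the arc of centres of the hemispheres supporting $Q$ at each of $o,a,b$ and along each edge; the symmetry across $\beta$ then reduces the estimate to a short computation showing that the maximum $\pi-r$ is attained only by the pair supporting $Q$ at $o$ and at $m$. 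I expect this minimality claim to be the main obstacle.

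Next I would identify the circumscribed disk. Any disk covering $Q$ contains $o,a,b$, so the circumradius of $Q$ is at least that of the triangle $oab$; I would then show that the circle through these three points already covers $Q$, forcing equality, and by the circumscribed-circle property quoted above this circle is the natural candidate. By symmetry its centre $c$ lies on $\beta$. That this circle, rather than a two-point disk, is the minimal enclosing disk of $\{o,a,b\}$ I would get by checking that each disk having $oa$, $ob$ or $ab$ as a diameter omits the third vertex (for $ab$ the inequality reduces to $(1-\cos r)^2\ge 0$). That it covers all of $Q$ I would get by noting that along the arc $ab$ the distance to $c$ is largest at the endpoints $a,b$, and along each geodesic edge at its endpoints, all of which lie on the circle.

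Finally the value of $\rho=|oc|$ follows from one right triangle. Writing $n$ for the foot of the perpendicular from $c$ to the geodesic $oa$, the triangle $onc$ has a right angle at $n$, hypotenuse $|oc|=\rho$, and angle $\frac{\pi}{4}$ at $o$ (since $c\in\beta$ and $\beta$ bisects the right angle $\angle aob$); moreover $|co|=|ca|$ forces $n$ to be the midpoint of $oa$, so $|on|=\frac{r}{2}$. Applying formula \eqref{cos} yields $\tan\frac{r}{2}=\cos\frac{\pi}{4}\,\tan\rho=\frac{1}{\sqrt2}\tan\rho$, that is $\tan\rho=\sqrt2\,\tan\frac{r}{2}$. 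Since $\Delta(Q)=r$, this is exactly $\rho=\arctan\!\left(\sqrt2\,\tan\frac{\Delta(Q)}{2}\right)$, and the circumradius itself is then routine spherical trigonometry once the thickness is in hand.
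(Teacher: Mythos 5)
Your core computation is exactly the paper's proof: both arguments reduce the circumradius of $Q$ to that of the triangle $oab$ (the paper calls the center $c$), drop a perpendicular from the circumcenter to the midpoint of a leg, and apply formula (\ref{cos}) in the resulting right triangle with angle $\frac{\pi}{4}$ at the center to get $\tan\frac{\Delta(Q)}{2}=\cos\frac{\pi}{4}\tan\rho$. The genuine gap in your write-up is your ``first substantial step'', the identification $\Delta(Q)=r$: you exhibit a supporting lune of thickness $r$, but the minimality half --- that no supporting lune of $Q$ is thinner --- you openly defer, calling it ``the main obstacle''. As it stands this is a missing proof, not a proof: the lemma expresses $\rho$ in terms of $\Delta(Q)$, so without $\Delta(Q)=r$ your right-triangle computation yields $\tan\rho=\sqrt{2}\tan\frac{r}{2}$ but not the stated formula. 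The paper does not prove this fact either; it silently uses $|cp|=\frac{\Delta(Q)}{2}$ for the midpoint $p$ of a leg, i.e.\ it takes for granted the known fact from \cite{L2} (where the quarter of a disk is introduced as a reduced body) that the thickness of a quarter of a disk equals the disk's radius. The cleanest repair is to cite that fact; if you insist on a self-contained proof, your reduction is the right one --- the thickness of a lune $G\cap H$ equals $\pi$ minus the distance between the centers of $G$ and $H$, so it suffices to bound the diameter of the set of centers of hemispheres containing $Q$ (the set the paper later calls $Q^\oplus$) by $\pi-r$ --- but the computation you only sketch must actually be carried out.

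On the other side of the ledger, you are more careful than the paper about why the circumscribed circle of the triangle is the minimal disk covering $Q$, and your extra care is warranted. The paper's blanket claim that \emph{any} disk containing the three vertices contains $Q$ is false as literally stated: a disk centered on the bisector $\beta$ but on the opposite side of $o$ from $m$ can contain $o$, $a$, $b$ while missing the arc midpoint $m$. What is actually needed, and what you verify, is only that the specific circumscribed disk --- whose center lies on $\beta$ inside the wedge, so that distances to the arc $ab$ and to the two legs are maximized at the vertices --- covers $Q$, and that no diametral disk on two of the vertices contains the third (your check that for $ab$ this reduces to $(1-\cos r)^2\ge 0$ is correct). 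So your treatment of the covering step is an improvement; the thickness step is the part you still owe.
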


\begin{proof}
Denote by $c$ the center of the disk whose quarter is considered and by $a,b$ the two different extreme points of $Q$ such that $ca$ and $cb$ are subsets of the boundary of $Q$. 
It is easily seen that if a disk contains points $a,b,c$, then it contains $Q$.
Therefore we are looking for the radius $\rho$ of the disk circumscribed on the triangle $abc$.
Denote by $o$ the center of this disk and by $p$ a point on $ac$ such that the angle $\angle cpo$ is right.
Since $o$ is in equal distances from $a$ and $c$, the point $p$ is in the middle of $ac$ and thus $|cp|=\frac{\Delta (Q)}{2}$.
Clearly, the angle $\angle pco$ equals $\frac{\pi}{4}$ and $|oc|$ is equal to the radius of our disk.
Hence, considering the triangle $cop$, by (\ref{cos}) we have $\tan \frac{\Delta(Q)}{2}= \cos \frac{\pi}{4} \tan \rho$.
By evaluating $\rho$ we obtain the thesis of our lemma.
\end{proof}

Recall after \cite{L2} that \emph{Reuleaux triangle} is the intersection of three disks of radius $\sigma$ such that the centers of these disks are pairwise distant by $\sigma$.

\begin{lem}\label{Reul}
The circumradius of a spherical Reuleaux triangle $R$ is equal \\
$\rho= \arcsin \left( \frac{2\sqrt{3}}{3} \cdot \sin \frac{\Delta(R)}{2}\right)$. 
\end{lem}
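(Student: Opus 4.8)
The plan is to reduce the problem to finding the circumradius of the equilateral triangle spanned by the three centres of the generating disks, in the same spirit as Lemma \ref{quart}. Write $\sigma=\Delta(R)$ and let $A,B,C$ be the centres of the three disks of radius $\sigma$ whose intersection is $R$; these are exactly the three corners of $R$, and they satisfy $|AB|=|BC|=|CA|=\sigma$. Since a spherical Reuleaux triangle is a body of constant width equal to the common radius of its generating disks (equivalently, the distance from each corner to the opposite bounding arc is $\sigma$), we have $\Delta(R)=\sigma$.

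First I would identify the circumscribed disk. Let $o$ be the circumcentre of the triangle $ABC$ and $r$ its circumradius, so $|oA|=|oB|=|oC|=r$. Any disk covering $R$ must contain the corners $A,B,C$, hence has radius at least $r$; so it suffices to show that the disk of radius $r$ centred at $o$ already contains $R$, which forces the circumradius of $R$ to equal $r$.

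The one genuine obstacle is to confirm that every point of $R$ lies within distance $r$ of $o$, the value $r$ being attained exactly at the corners. Since $o\in\inter(R)$, the maximum distance is attained on the boundary, so it is enough to treat the three bounding arcs; by symmetry take the arc centred at $A$ joining $B$ and $C$. For a point $P$ of this arc the spherical law of cosines in the triangle $AoP$ gives $\cos|oP|=\cos r\cos\sigma+\sin r\sin\sigma\cos\varphi$, where $\varphi=\angle oAP$ increases from $0$ at the arc's midpoint to $\angle oAB$ at its endpoint $B$. Hence $|oP|$ grows with $\varphi$ and is maximal at the endpoints, where $|oP|=r$; at the midpoint it equals the smaller value $\sigma-r$. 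Thus $\max_{P\in R}|oP|=r$ and the disk of radius $r$ about $o$ covers $R$.

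It remains to compute $r$. Let $p$ be the midpoint of the side $AB$. In the isosceles triangle $AoB$ the segment $op$ is perpendicular to $AB$ and bisects $\angle AoB=\frac{2\pi}{3}$, so the triangle $Aop$ has a right angle at $p$, hypotenuse $|Ao|=r$, leg $|Ap|=\frac{\sigma}{2}$, and the angle at $o$ opposite this leg equal to $\frac{\pi}{3}$. Applying (\ref{sin}) gives $\sin\frac{\sigma}{2}=\sin\frac{\pi}{3}\sin r=\frac{\sqrt3}{2}\sin r$, whence $\sin r=\frac{2\sqrt3}{3}\sin\frac{\sigma}{2}$ and $r=\arcsin\!\left(\frac{2\sqrt3}{3}\sin\frac{\Delta(R)}{2}\right)$, as claimed.
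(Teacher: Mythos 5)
Your proof is correct and takes essentially the same approach as the paper: both reduce the problem to the circle circumscribed about the equilateral triangle of corners $A,B,C$ (the paper's three non-smooth boundary points) and compute its radius from the same right triangle via formula (\ref{sin}), obtaining $\sin\frac{\Delta(R)}{2}=\sin\frac{\pi}{3}\sin\rho$. The only difference is that you explicitly verify, by the law of cosines and a monotonicity argument along each bounding arc, the covering claim that the paper dismisses as ``easy to check'' --- a useful elaboration rather than a different route.
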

\begin{proof}
Denote by $a,b,c$ the three points of $\textrm{bd}(R)$ that are not smooth. 
It is easy to check that any disk containing these three points contains $R$.
Thus $\rho$ is equal to the radius of the disk circumscribed  on the triangle $abc$.
Denote by $o$ the center of this disk and by $p$ the middle of the arc $ab$.  
Clearly, the angle $\angle opa$ is right, the angle $\angle aop$ is equal $\frac{\pi}{3}$ and $|ap|=\frac{\Delta (R)}{2}$.
Therefore, if we look at the triangle $aop$, by (\ref{sin}) we obtain $\sin \frac{\Delta(R)}{2} = \sin \frac{\pi}{3}\sin \rho$. 
We easily evaluate $\rho$ which establishes the promised formula.
\end{proof}

\begin{lem}\label{equi}
The circumradius $\rho$ of a spherical equilateral triangle $T$ of thickness less than $\frac{\pi}{2}$ equals $\arctan \frac{\sqrt{9+8\tan^2\Delta(T)}-3}{2\tan \Delta (T)}$.
\end{lem}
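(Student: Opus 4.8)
The plan is to mimic the strategy of Lemmas \ref{quart} and \ref{Reul}: reduce the problem to finding the radius of the circle circumscribed on the three vertices of $T$, and then extract that radius from a single right triangle by means of the formulas of spherical trigonometry. Denote the vertices of $T$ by $a,b,c$. Since $T=\textrm{conv}\{a,b,c\}$ and every disk of radius at most $\frac{\pi}{2}$ is convex, any disk containing $a,b,c$ contains $T$; hence the circumradius $\rho$ of $T$ equals the radius of the disk circumscribed on the triangle $abc$. The first thing I would record is the identification of the thickness of $T$ with its altitude: letting $p$ be the midpoint of the side $ab$ and $cp$ the altitude dropped from $c$, I claim $\Delta(T)=|cp|$. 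This is the step I expect to be the main obstacle, since it requires exhibiting the thinnest supporting lune of $T$; by the symmetry of $T$ it suffices to look at the lune bounded by the great circle through $ab$ and the great circle supporting $T$ at $c$, and to verify that its thickness equals $|cp|$ and is minimal among all supporting lunes. Everything after this identification is a direct computation.

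Next I would introduce the circumcenter $o$ of the triangle $abc$. By the threefold symmetry of an equilateral triangle, $o$ is simultaneously the incenter, so it lies on the altitude $cp$, with $o$ between $c$ and $p$. Writing $r=|op|$ for the inradius and recalling $\rho=|oc|$, the altitude splits as
\[
\Delta(T)=|cp|=|co|+|op|=\rho+r .
\]
Thus the task becomes expressing $r$ through $\rho$ and then combining the two by the addition formula for the tangent.

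To relate $r$ and $\rho$ I would use the right triangle $aop$. It has its right angle at $p$ (the radius $op$ to the midpoint of $ab$ is perpendicular to $ab$), hypotenuse $oa=\rho$, and legs $op=r$ and $ap=\frac{s}{2}$, where $s=|ab|$. The angle $\angle aop$ equals $\frac{\pi}{3}$: the circumcenter sees the three equal sides under equal angles summing to $2\pi$, so $\angle aob=\frac{2\pi}{3}$, and $op$ bisects it. Applying $(\ref{cos})$ with the leg $op$ in the role of the short side and $\angle aop=\frac{\pi}{3}$ (the angle opposite the other leg $ap$) in the role of $\beta$ gives $\tan r=\cos\frac{\pi}{3}\tan\rho=\frac{1}{2}\tan\rho$.

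Finally I would substitute into the tangent addition formula:
\[
\tan\Delta(T)=\tan(\rho+r)=\frac{\tan\rho+\tan r}{1-\tan\rho\tan r}=\frac{\frac{3}{2}\tan\rho}{1-\frac{1}{2}\tan^2\rho}=\frac{3\tan\rho}{2-\tan^2\rho}.
\]
Setting $u=\tan\rho$ this reads $\tan\Delta(T)\,(2-u^2)=3u$, i.e. the quadratic $\tan\Delta(T)\,u^2+3u-2\tan\Delta(T)=0$; solving for the positive root and taking $\arctan$ yields $\rho=\arctan\frac{\sqrt{9+8\tan^2\Delta(T)}-3}{2\tan\Delta(T)}$, which is the claimed formula. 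Beyond the identification of $\Delta(T)$ with the altitude, the only points requiring care are the correct reading of the angles in the right triangle $aop$ and the selection of the positive root of the quadratic.
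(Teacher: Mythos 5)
Your proof is correct and takes essentially the same approach as the paper's: both reduce the problem to the right triangle formed by the circumcenter, a vertex, and the midpoint of a side, apply (\ref{cos}) with the central angle $\frac{\pi}{3}$ to get $\tan\bigl(\Delta(T)-\rho\bigr)=\frac{1}{2}\tan\rho$ (your $\tan r=\frac{1}{2}\tan\rho$ with $r=\Delta(T)-\rho$), and solve the resulting quadratic $\tan\Delta(T)\tan^2\rho+3\tan\rho-2\tan\Delta(T)=0$ for its positive root. The only cosmetic difference is that you use the tangent addition formula for $\rho+r$ where the paper uses the subtraction formula for $\Delta(T)-\rho$; your explicit flagging of the identification of the thickness with the altitude is a point the paper simply labels as clear.
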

\begin{proof}
Denote by $o$ the center of the disk circumscribed on our triangle, by $d$ the center of a side of this triangle and by $a$ an endpoint of this side.
Clearly $\angle doa  = \frac{\pi}{3}$, $|oa| = \rho$ and $|od|=\Delta (T) - \rho$.
Therefore by (\ref{cos})  we obtain
$\displaystyle\tan (\Delta (T) - \rho ) = \frac 12  \tan \rho $. 
Using the subtraction formula for the tangent function we can rewrite this equation as
$\displaystyle\frac{ \tan \Delta (T) - \tan \rho}{1+\tan \Delta (T) \tan \rho} = \frac{\tan \rho}{2}$.
This is equivalent to
$\displaystyle\tan \Delta (T) (\tan \rho ) ^2 +3 \tan \rho - 2\tan \Delta (T) =0$.
Consequently, by $\tan \rho >0$ we get  $\tan \rho = \frac{\sqrt{9+8\tan^2\Delta(T)}-3}{2\tan \Delta (T)}$, which ends the proof.
\end{proof}

\section{Covering a body of constant width over $\frac{\pi}{2}$ by a disk}

For any set $F$ on the sphere $S^2$ we define the set $F^\oplus$ as $\left\{ p: F\subset H(p)\right\}$.

\smallskip

In \cite{HN} and \cite{NS} there is used the notion of the polar set of the set $F$ on the sphere, which is defined as $F^o= \bigcap_{p\in F} H(p)$.

\begin{pro}
For every set $F$ on the sphere we have $F^o=F^\oplus$.
\end{pro}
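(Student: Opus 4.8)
The plan is to prove this set equality by a pointwise double inclusion, reducing everything to the symmetry of the spherical distance. The essential observation is that membership in a hemisphere is a symmetric relation between its center and a point: since $H(p)$ is the disk of radius $\frac{\pi}{2}$ centered at $p$, for any two points $p,q \in S^2$ we have $q \in H(p)$ if and only if $|pq| \le \frac{\pi}{2}$, and because $|pq| = |qp|$ this holds if and only if $p \in H(q)$. I would state this equivalence first, as it is the only geometric input the argument needs.

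Next I would fix an arbitrary point $q \in S^2$ and unwind the definition of $F^\oplus$. By definition $q \in F^\oplus$ means $F \subset H(q)$, which is to say $p \in H(q)$ for every $p \in F$. Applying the symmetry observation to each such $p$, this is equivalent to $q \in H(p)$ for every $p \in F$. But the latter condition is precisely the assertion that $q$ lies in every hemisphere $H(p)$ with $p \in F$, i.e.\ $q \in \bigcap_{p \in F} H(p) = F^o$.

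Since every link in this chain is an equivalence, the two membership conditions $q \in F^\oplus$ and $q \in F^o$ coincide for each $q \in S^2$, and therefore $F^\oplus = F^o$, as claimed. There is no genuine analytic difficulty here; the proof is purely definitional. The only point requiring care is to make the symmetry of hemisphere membership explicit and to ensure the universal quantifier over $p \in F$ is treated uniformly on both sides, so that the equivalence can be read off directly from the definitions without invoking any auxiliary construction.
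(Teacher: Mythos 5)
Your proof is correct and follows essentially the same route as the paper: both arguments fix an arbitrary point $q$ and run the same chain of equivalences $q\in F^o \Leftrightarrow \forall_{p\in F}\ q\in H(p) \Leftrightarrow \forall_{p\in F}\ |pq|\le \frac{\pi}{2} \Leftrightarrow \forall_{p\in F}\ p\in H(q) \Leftrightarrow q\in F^\oplus$, resting on the symmetry of the spherical distance. The only difference is cosmetic: you traverse the chain starting from $F^\oplus$ rather than from $F^o$.
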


\begin{proof}
For any point $q$ we have:
$\displaystyle q\in F^o\Leftrightarrow \forall_{p\in F}\  q\in H(p) \Leftrightarrow \forall_{p\in F}\ |pq| \le \frac{\pi}{2} \Leftrightarrow\forall_{p\in F}\ p\in H(q) \Leftrightarrow F \subset H(q) \Leftrightarrow q\in F^\oplus$.
\end{proof}

However, applying $F^\oplus$ is more convenient in this paper.

\medskip

We omit here a simple proof of the next lemma

\begin{lem}
If $C$ is a spherical convex body, then $C^\oplus$ is also a spherical convex body.
\end{lem}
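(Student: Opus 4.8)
The plan is to use the representation $C^\oplus=\bigcap_{p\in C}H(p)$ provided by the preceding Proposition and to check the three requirements of a spherical convex body — closedness, convexity, and non-empty interior — separately. Closedness is immediate, since each hemisphere $H(p)$ is a closed disk and an arbitrary intersection of closed sets is closed.

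For convexity I would first exclude antipodal pairs. If a point $q$ and its antipode $q'$ both belonged to $C^\oplus$, then $C\subset H(q)\cap H(q')$; but $H(q)$ and $H(q')$ are opposite hemispheres, so their intersection is the great circle bounding them and has empty interior, contradicting the fact that the body $C$ has non-empty interior. Hence $C^\oplus$ contains no antipodes. For arc-closedness I would use the elementary observation that a hemisphere contains the whole arc joining any two of its non-antipodal points: if $|pq_1|\le\frac{\pi}{2}$, $|pq_2|\le\frac{\pi}{2}$ and $q_1,q_2$ are not antipodes, then every $r$ on the arc $q_1q_2$ satisfies $|pr|\le\frac{\pi}{2}$ (most cleanly seen by writing the unit vector of $r$ as a non-negative combination of those of $q_1,q_2$ and taking its inner product with the vector of $p$). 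Any two points $q_1,q_2\in C^\oplus$ are non-antipodal by the previous step, so applying this observation to every $p\in C$ gives $q_1q_2\subset H(p)$ for all such $p$, whence $q_1q_2\subset\bigcap_{p\in C}H(p)=C^\oplus$. Thus $C^\oplus$ is convex.

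The principal difficulty is to produce an interior point of $C^\oplus$. The key identity is $\inter(C^\oplus)=\{p:C\subset\inter H(p)\}$: if $C$ lies in the open hemisphere centered at $p$ then, $C$ being compact, $\max_{x\in C}|px|<\frac{\pi}{2}$, so every $p'$ sufficiently close to $p$ still satisfies $C\subset H(p')$ and therefore $p\in\inter(C^\oplus)$; conversely, if some $x_0\in C$ meets the circle bounding $H(p)$, then moving $p$ away from $x_0$ along their common great circle ejects $x_0$ from the hemisphere, so $p$ is not interior. Consequently $\inter(C^\oplus)\ne\emptyset$ precisely when $C$ is contained in some open hemisphere. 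This last fact — that every spherical convex body, being compact and free of antipodal pairs, lies in an open hemisphere — is the step I expect to demand the most care; I would either cite it from \cite{L2} or argue it directly by taking a closed supporting hemisphere $H(p_0)\supset C$, observing that $C\cap\bd H(p_0)$ is an arc shorter than a semicircle (it is convex and antipode-free), and tilting $H(p_0)$ slightly away from that arc to obtain an open hemisphere that still contains $C$. It is exactly here that the hypotheses on $C$ are used, while the remaining properties pass to $C^\oplus$ almost formally through the intersection.
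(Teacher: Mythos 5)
Your proof is correct, but note that the paper does not actually supply a proof of this lemma --- it states ``We omit here a simple proof of the next lemma'' --- so there is no argument of the author's to compare yours against; yours fills the omission. All three verifications are sound: closedness is immediate from $C^\oplus=\bigcap_{p\in C}H(p)$; the exclusion of antipodes correctly isolates where the hypothesis $\inter C\neq\emptyset$ is indispensable (for a single point $p$ one has $\{p\}^\oplus=H(p)$, which contains antipodal boundary pairs and so is not convex in the paper's sense); and the arc-closedness argument via non-negative linear combinations of unit vectors is the standard and cleanest route, since $|pq|\le\frac{\pi}{2}$ is exactly $\langle p,q\rangle\ge 0$ in $E^3$. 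The only step that genuinely requires work is, as you say, producing an interior point, which reduces to the fact that a compact, antipode-free convex body lies in an \emph{open} hemisphere. Your tilting sketch is correct but, if written out, needs the small compactness argument hiding behind ``tilting slightly'': one must rule out points of $C$ accumulating on the arc $C\cap\bd H(p_0)$ while escaping the tilted hemisphere; writing $p_\epsilon=\cos\epsilon\, p_0+\sin\epsilon\, m$ (with $m$ the midpoint of that arc) and arguing by contradiction with a convergent subsequence settles it, since every point of the arc has positive inner product with $m$. Alternatively, citing this containment fact from \cite{L2}, as you propose, is entirely adequate and is presumably what the author had in mind in calling the proof simple.
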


By the way, observe that $(C^\oplus)^\oplus=C$ for a spherical convex body $C$.

\begin{pro}\label{pi-d}
If $W$ is a spherical convex body of constant width, then $W^\oplus$ is a spherical convex body of constant width $\pi - \Delta (W)$.
\end{pro}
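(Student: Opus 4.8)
The plan is to express the quantity ${\rm width}_K$ entirely through the dual body $W^\oplus$ and then to use the involution $(C^\oplus)^\oplus=C$ recorded above. I would first isolate two elementary facts: (a) the thickness of a lune $H(g)\cap H(h)$ equals $\pi-|gh|$, which follows from a direct computation of the distance between the centres of its two bounding semicircles; and (b) a hemisphere $H(k)$ supports a spherical convex body $C$ precisely when $k\in\bd(C^\oplus)$. For (b) note that $C\subset H(k)$ is by definition equivalent to $k\in C^\oplus$, while $H(k)$ can be shrunk strictly away from $C$ exactly when $k\in\inter(C^\oplus)$; writing $t$ for the contact point one has $t\in\bd(C)$ and $|kt|=\frac{\pi}{2}$, and symmetrically $H(t)$ supports $C^\oplus$ at $k$.

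Combining (a) and (b), for $k\in\bd(C^\oplus)$ every lune $H(k)\cap H(k^*)$ formed from two supporting hemispheres has thickness $\pi-|kk^*|$, so its thickness is minimal exactly when $|kk^*|$ is maximal over $k^*\in\bd(C^\oplus)$. Introducing the farthest--point function $r_D(x)=\max_{y\in D}|xy|$, this yields
\[
{\rm width}_{H(k)}(C)=\pi-r_{C^\oplus}(k),\qquad k\in\bd(C^\oplus).
\]
(For a disk of radius $\rho$ one checks that $C^\oplus$ is the concentric disk of radius $\frac{\pi}{2}-\rho$ and the formula returns the correct constant width $2\rho$.) Applied to $W$, this says that $W$ has constant width $\Delta(W)$ if and only if $r_{W^\oplus}\equiv\pi-\Delta(W)$ on $\bd(W^\oplus)$. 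Applied instead to the body $W^\oplus$, and using $(W^\oplus)^\oplus=W$, the width of $W^\oplus$ determined by a supporting hemisphere $H(t)$, $t\in\bd(W)$, equals $\pi-r_W(t)$. Thus the proposition is equivalent to the assertion that the farthest--point function $r_W$ is constant and equal to $\Delta(W)$ on the whole of $\bd(W)$.

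This reduction is where I expect the main obstacle to lie. The bound $r_W(t)\le\Delta(W)$ is just $\diam(W)\le\Delta(W)$, whereas $r_W(t)\ge\Delta(W)$ asserts that \emph{every} boundary point of $W$ is an endpoint of a chord of length $\Delta(W)$. To obtain both I would work with the minimum--width lune: for a supporting hemisphere $H(k)$ the minimal lune $H(k)\cap H(k^*)$ has thickness exactly $\Delta(W)$, and I would show that $W$ meets its two bounding semicircles at points $t,t^*$ for which the arc $tt^*$ is perpendicular to both semicircles and has length equal to the thickness $\Delta(W)$; then $\{t,t^*\}$ is a diametral pair and $r_W(t)=\Delta(W)$. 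As $H(k)$ runs over all supporting hemispheres the contact point $t$ runs over all of $\bd(W)$, giving the constant value everywhere. The delicate point is to prove that this contact chord is genuinely diametral and orthogonal to the lune, a property special to constant--width bodies; here I would lean on their strict convexity and smoothness and on the structural results for spherical bodies of constant width in \cite{L2}.
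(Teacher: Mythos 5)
Your reduction is correct, and it is essentially the paper's own mechanism written more systematically: the identity $\Delta\bigl(H(g)\cap H(h)\bigr)=\pi-|gh|$, the equivalence ``$H(k)$ supports $C$ if and only if $k\in\bd(C^\oplus)$'', and the involution $(C^\oplus)^\oplus=C$ combine into ${\rm width}_{H(k)}(C)=\pi-r_{C^\oplus}(k)$, so the proposition becomes the assertion that $r_W(t)=\Delta(W)$ for every $t\in\bd(W)$. The upper bound $r_W\le\Delta(W)$ is the known fact that a body of constant width has diameter $\Delta(W)$, which the paper also invokes without proof. All of this is sound.

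The problem is that your argument stops exactly where the mathematical content begins. The inequality $r_W(t)\ge\Delta(W)$ for \emph{every} boundary point $t$ --- that every boundary point of a constant-width body is an endpoint of a diametral chord --- is not a soft consequence of convexity; it is the structural heart of the proposition, and you leave it at ``I would show\dots I would lean on\dots''. The paper settles precisely this point with a single citation, Theorem 7 of \cite{LMu}: for every boundary point $a$ of $W$ there exist supporting hemispheres $K,M$ such that the lune $K\cap M$ has thickness $\Delta(W)$ and $a$ is the center of $K/M$. Given that, nothing delicate remains: the center $b$ of the other semicircle $M/K$ lies in $W$ (Theorem 1 of \cite{L2}, used the same way in the proof of Theorem \ref{const}, and needed implicitly in the paper's own proof to conclude $W^\oplus\subset H(b)$), and $|ab|$ \emph{equals} the thickness of the lune by the paper's very definition of lune thickness as the distance of the centers of the two bounding semicircles. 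So the perpendicularity property you propose to establish is a red herring, and no appeal to strict convexity or smoothness is required. As written, your text is a correct reduction plus an accurate diagnosis of the remaining difficulty, not a proof; it becomes one the moment the final paragraph is replaced by the citation of Theorem 7 of \cite{LMu} (or by an actual proof of that statement, which is genuine work carried out in \cite{LMu}, not in this paper).
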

\begin{proof}
Consider a hemisphere $H(a)$ supporting $W^\oplus$. 
We intend to show that $\textrm{width}_{H(a)}(W^\oplus)=\pi - \Delta (W)$.
Since clearly $a$ is a boundary point of $W$, by Theorem 7 of \cite{LMu} there exist hemispheres $K$ and $M$ supporting $W$ such that the lune $K\cap M$ is of thickness $\Delta(W)$ and $a$ is the center of $K/M$.
Denote the center of the semicircle $M/K$ by $b$ and the centers of the hemispheres $K$ and $M$ by $a'$ and $b'$, respectively.
Since $a,b,a',b'$ are in the same distance from both corners of $K\cap M$, all these points lay on the same great circle.
From the above we easily obtain that $|ab|+|a'b'|=|aa'|+|bb'|=\pi$ and thus $|a'b'|=\pi - \Delta (W)$.
Since every point of $W^\oplus$ is in the distance at most $\frac{\pi}{2}$ from $b$, the body $W^\oplus$ is contained in $H(b)$.
Therefore $W^\oplus$ is contained in the lune $H(a)\cap H(b)$, which is of thickness $|a'b'|$. It means that $\textrm{width}_{H(a)}(W^\oplus)$ is at most $ \pi - \Delta (W)$.

Assume that $\textrm{width}_{H(a)}(W^\oplus)< \pi - \Delta (W)$. 
Then there exists a point $\overline{b}$ such that the lune $H(a)\cap H(\overline{b})$ is of thickness less that $ \pi - \Delta (W)$ and contains $W^\oplus$.
From $|a\overline{b}| + \Delta (H(a)\cap H(\overline{b}))=\pi$ we conclude that $|a\overline{b}|>\Delta (W)$.
But this contradicts the fact that every body $W$ of constant width has diameter $\Delta (W)$.
Hence, $\textrm{width}_{H(a)}(W^\oplus)=\pi - \Delta (W)$, which ends the proof.
 
\end{proof}

In \cite{De} there was estimated the diameter of a spherical compact set by the function  of its circumradius (see: the second part of Theorem 2 in \cite{De}).
Recall this result on $S^2$ in a different form: if $d$ is the diameter of a  compact set and $\sigma$ is its circumradius, then  $\sin \sigma \le \frac{2\sqrt{3}}{3} \cdot \sin \frac{d}{2}$.
In particular it holds true for every spherical convex body $W$ of constant width and in this case $d=\Delta (W)$.
If $\Delta (W)$ is at most $\frac{2\pi}{3}$ then $\frac{2\sqrt{3}}{3} \cdot \sin \frac{d}{2}$ is at most $1$.
In this case our inequality is equivalent to the statement that every spherical convex body of constant width at most $\frac{2\pi}{3}$ can be covered by a disk of radius $\arcsin \left( \frac{2\sqrt{3}}{3} \cdot \sin \frac{\Delta(W)}{2}\right)$.
If $\Delta (W)$ is greater than $\frac{2\pi}{3}$ then $\frac{2\sqrt{3}}{3} \cdot \sin \frac{d}{2}$ is greater than $1$ and in this case our inequality does not estimate $\sigma$ in a non-trivial way.

According to Lemma \ref{Reul} the example of a spherical Reuleaux triangle shows that the estimate can not be improved for bodies of constant width at most $\frac{\pi}{2}$.
The following theorem describes the case of bodies of constant width at least $\frac{\pi}{2}$ and in particular gives an improvement of the estimate recalled  from \cite{De} for convex bodies of constant width greater than $\frac{\pi}{2}$.

\begin{thm}\label{const}
Every spherical body $W$ of constant width at least $\frac{\pi}{2}$ is contained in a disk of radius $\Delta(W) + \arcsin \left( \frac{2\sqrt{3}}{3} \cdot \cos \frac{\Delta(W)}{2}\right) - \frac{\pi}{2}$.\end{thm}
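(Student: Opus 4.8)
The plan is to pass to the dual body $W^\oplus$, bound its circumradius by the estimate recalled from \cite{De}, dualize back to obtain a large disk inscribed in $W$, and finally convert this inscribed disk into a circumscribed one using that a body of constant width has diameter equal to its thickness.

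First I would invoke Proposition \ref{pi-d}: the body $W^\oplus$ has constant width $\pi-\Delta(W)$, which does not exceed $\frac{\pi}{2}$ since $\Delta(W)\ge\frac{\pi}{2}$. Because a body of constant width has diameter equal to its thickness, $\diam(W^\oplus)=\pi-\Delta(W)$, and the estimate recalled from \cite{De} bounds the circumradius $\sigma$ of $W^\oplus$ by
\[ \sin\sigma \le \frac{2\sqrt{3}}{3}\sin\frac{\pi-\Delta(W)}{2} = \frac{2\sqrt{3}}{3}\cos\frac{\Delta(W)}{2}. \]
The right-hand argument is at most $\frac{\sqrt{6}}{3}<1$, so this is meaningful and yields $\sigma \le \arcsin\!\left(\frac{2\sqrt{3}}{3}\cos\frac{\Delta(W)}{2}\right)<\frac{\pi}{2}$. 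Let $o$ be the center of the disk $D(o,\sigma)$ covering $W^\oplus$.

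Next I would dualize the inclusion $W^\oplus\subseteq D(o,\sigma)$. The operation $F\mapsto F^\oplus$ reverses inclusions, and a direct computation gives $\big(D(o,\sigma)\big)^\oplus=D\!\left(o,\frac{\pi}{2}-\sigma\right)$ for $\sigma<\frac{\pi}{2}$, since $D(o,\sigma)\subseteq H(p)$ holds exactly when $|op|+\sigma\le\frac{\pi}{2}$. Using $(W^\oplus)^\oplus=W$ this produces
\[ D\!\left(o,\tfrac{\pi}{2}-\sigma\right)=\big(D(o,\sigma)\big)^\oplus\subseteq(W^\oplus)^\oplus=W, \]
so $W$ contains the disk $D(o,\tau)$ with $\tau=\frac{\pi}{2}-\sigma>0$; in particular $o\in\inter W$. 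Moreover, since $W^\oplus$ lies in the open hemisphere around $o$ (as $\sigma<\frac{\pi}{2}$) and the center of the smallest covering disk of a convex body contained in an open hemisphere belongs to that body, we have $o\in W^\oplus$, equivalently $W\subseteq H(o)$, so $|ow|\le\frac{\pi}{2}$ for every $w\in W$.

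Finally I would turn the inscribed disk into the circumscribed one. Fix $w\in W$ and let $w'$ be the point at distance $\tau$ from $o$ on the great circle through $o$ and $w$, on the side of $o$ opposite to $w$; then $w'\in D(o,\tau)\subseteq W$, and because $|ow|+\tau\le\frac{\pi}{2}+\tau<\pi$ the point $o$ lies on the minor arc $ww'$, so $|ww'|=|ow|+\tau$. Since $w,w'\in W$ and $\diam(W)=\Delta(W)$, this gives $|ow|+\tau=|ww'|\le\Delta(W)$, i.e. $|ow|\le\Delta(W)-\tau$. As $w$ was arbitrary, $W\subseteq D\big(o,\Delta(W)-\tau\big)$, and
\[ \Delta(W)-\tau=\Delta(W)-\frac{\pi}{2}+\sigma\le\Delta(W)+\arcsin\!\left(\frac{2\sqrt{3}}{3}\cos\frac{\Delta(W)}{2}\right)-\frac{\pi}{2}; \]
hence $W$ is contained in the concentric disk of the claimed radius. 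The main obstacle is this last step: one must ensure the antipodal construction through $o$ produces a genuine chord of $W$ whose length is exactly $|ow|+\tau$, which is precisely why the bounds $\tau<\frac{\pi}{2}$ and $W\subseteq H(o)$ are needed to keep the arc $ww'$ shorter than $\pi$.
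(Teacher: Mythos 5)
Your proof is correct, and its second half takes a genuinely different route from the paper's. Both arguments start identically: Proposition \ref{pi-d} plus the recalled estimate of Dekster give the circumscribed disk $D(o,\sigma)$ of $W^\oplus$ with $\sin\sigma\le\frac{2\sqrt{3}}{3}\cos\frac{\Delta(W)}{2}$. From there the paper works pointwise on the boundary of $W$: for a boundary point $p$ it invokes smoothness of constant-width bodies (Proposition 3 of \cite{LMu}) and Theorem 1 of \cite{L2} to produce the point $b\in W^\oplus$ nearest to $p$, lying on the arc $ap$ (where $a$ is the center of the unique hemisphere supporting $W$ at $p$) with $|bp|=\Delta(W)-\frac{\pi}{2}$, and then concludes by the triangle inequality $|op|\le|ob|+|bp|$. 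You instead dualize the inclusion $W^\oplus\subseteq D(o,\sigma)$ into $D(o,\frac{\pi}{2}-\sigma)\subseteq W$, using the identity $\bigl(D(o,\sigma)\bigr)^\oplus=D\bigl(o,\frac{\pi}{2}-\sigma\bigr)$, inclusion reversal, and biduality $(W^\oplus)^\oplus=W$, and then exploit $\diam(W)=\Delta(W)$ on the chord $ww'$ through $o$. Your route avoids the width machinery entirely (no smoothness, no Theorem 1 of \cite{L2}), replacing it with elementary polarity computations, and it handles $\Delta(W)=\frac{\pi}{2}$ and $\Delta(W)>\frac{\pi}{2}$ uniformly, whereas the paper treats $\Delta(W)=\frac{\pi}{2}$ as a separate case; conversely, the paper's route needs neither the biduality $(C^\oplus)^\oplus=C$ nor any claim about the location of a circumcenter.

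One step of yours deserves tightening. You obtain $W\subseteq H(o)$ from the assertion that the center of the smallest covering disk of a convex body contained in an open hemisphere belongs to that body. This is true, but it is proved neither in the paper nor in your argument, and it requires a genuine (if standard) separation argument. You can bypass it entirely: from $D(o,\tau)\subseteq W$ you already have $o\in W$, and if some $w\in W$ satisfied $|ow|+\tau\ge\pi$, then either $w$ and the point $w'$ of your construction would be a pair of antipodes in $W$, or the minor arc $ww'$ would pass through the antipode of $o$, so that convexity of $W$ would force $W$ to contain both $o$ and its antipode; in either case $W$ would contain a pair of antipodes, which is impossible for a spherical convex body. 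Hence $|ow|+\tau<\pi$ holds automatically, the equality $|ww'|=|ow|+\tau$ stands, and the rest of your computation goes through unchanged.
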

\begin{proof}
Observe that for $\Delta(W)=\frac{\pi}{2}$ we have 
$\displaystyle \arcsin \left( \frac{2\sqrt{3}}{3} \cdot \cos \frac{\Delta(W)}{2}\right)+\Delta (W) - \frac{\pi}{2}= \arcsin \left( \frac{2\sqrt{3}}{3} \cdot \sin \frac{\Delta(W)}{2}\right) $.
Therefore by the above recalled result of Dekster the thesis of the theorem holds true for bodies of constant width $\frac{\pi}{2}$.

Assume now that $\Delta (W)>\frac{\pi}{2}$.
By Proposition \ref{pi-d} the body $W^\oplus$ is of thickness $\pi - \Delta (W)$.
Hence by the above recalled result of Dekster, $W^\oplus$ is contained in a disk of radius $\displaystyle \arcsin \left( \frac{2\sqrt{3}}{3} \cdot \sin \frac{\pi-\Delta (W)}{2}\right)=\\ \arcsin \left( \frac{2\sqrt{3}}{3} \cdot \cos \frac{\Delta(W)}{2}\right) $.
Denote the center of this disk by $o$ and let $p$ be a boundary point of $W$.
By Proposition 3 of \cite{LMu} $W$ is smooth, and therefore there exists exactly one hemisphere supporting $W$ at $p$. 
Denote its center by $a$ and notice that $a$ is a boundary point of $W^\oplus$. 
Moreover, $H(p)$ is a supporting hemisphere of $W^\oplus$ and it supports $W^\oplus$ at $a$.
By the proof of the first part of Theorem 1 of \cite{L2}, there exists a unique point of $W^\oplus$ closest to $p$. 
Denote it by $b$. 
Again by Theorem 1 of \cite{L2} $b$ is the center of one of the two semicircles bounding the lune of thickness $\textrm{width}_{H(p)}(W^\oplus)= \Delta (W^\oplus)= \pi - \Delta (W)$.
Thus  $|ab|=\Delta (W^\oplus)$ and clearly $b\in ap$.
Hence $|op|\le |ob|+|bp|=|ob|+|ap|-|ab|=|ob|+ \frac{\pi}{2} - \left(\pi - \Delta (W)\right) = |ob|+\Delta (W) - \frac{\pi}{2}$.
Since $|ob|$ is at most $\arcsin \left( \frac{2\sqrt{3}}{3} \cdot \cos \frac{\Delta(W)}{2}\right)$, the distance between $o$ and $p$ is at most $\arcsin \left( \frac{2\sqrt{3}}{3} \cdot \cos \frac{\Delta(W)}{2}\right)+\Delta (W) - \frac{\pi}{2}$ which ends the proof.

\end{proof}

Observe that in general we can not improve the estimate from 
Theorem \ref{const}.
By proof of this theorem we see that the value $\Delta(W) + \arcsin \left( \frac{2\sqrt{3}}{3} \cdot \cos \frac{\Delta(W)}{2}\right) - \frac{\pi}{2}$ is attained for every $W$ such that $W^\oplus$ is a Reuleaux triangle.

\section{Covering reduced bodies of thickness at most $\frac{\pi}{2}$ by a disk}

The main theorem of this paper is analogous to Theorem of \cite{L3}. 
However, we are not able to present a similar proof as in \cite{L3} due to the lack of the notion parallelism on the sphere.
For this reason the proof of our main theorem is based on a different idea.

\begin{lem}\label{function}
Let $c$ be a positive number less than $\frac 14$ and $a$ be a number from 
 the interval $\left( \frac 12,\frac 12 + \sqrt{\frac 14 - c}\right)$. The function $f(x)= \sqrt{1-x} - \sqrt{1-x- \frac{c}{x}}$ satisfies $f(x) \le \max \left(  f\left( \frac 12 \right),f(a)\right)$ for every $x\in \left[ \frac 12, a\right]$.
\end{lem}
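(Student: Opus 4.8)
The plan is to show that $f$ is unimodal on $\left[\frac{1}{2},a\right]$ with an interior minimum — that is, $f$ first decreases and then increases — so that its maximum over the closed interval is forced to one of the two endpoints, which is exactly the claimed inequality. First I would check that $f$ is well defined and smooth on all of $\left[\frac{1}{2},a\right]$: the quantity $1-x-\frac{c}{x}$ under the inner radical equals $-\frac{1}{x}\left(x^2-x+c\right)$, and the roots of $x^2-x+c$ are $\frac{1}{2}\pm\sqrt{\frac{1}{4}-c}$. Since $\frac{1}{2}-\sqrt{\frac{1}{4}-c}<\frac{1}{2}\le x\le a<\frac{1}{2}+\sqrt{\frac{1}{4}-c}$, this quantity is strictly positive on $\left[\frac{1}{2},a\right]$; likewise $1-x>0$ there (as $a<1$), so $f$ is differentiable on the whole interval.

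Next I would reduce the sign of $f'$ to that of a polynomial. Differentiating gives
\[
f'(x)=\frac{1}{2}\left(\frac{1-\frac{c}{x^2}}{\sqrt{1-x-\frac{c}{x}}}-\frac{1}{\sqrt{1-x}}\right).
\]
Because $c<\frac{1}{4}\le x^2$ on the interval, the factor $1-\frac{c}{x^2}$ is positive, so both terms in the bracket are positive, and hence the sign of $f'(x)$ coincides with the sign of the difference of their squares. Clearing denominators, this difference of squares has the same sign as
\[
\left(1-\frac{c}{x^2}\right)^2(1-x)-\left(1-x-\frac{c}{x}\right),
\]
and a direct expansion shows this equals $\frac{c}{x^4}\,P(x)$ with $P(x)=3x^3-2x^2-cx+c$. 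Since $c,x>0$, I conclude that the sign of $f'(x)$ equals the sign of $P(x)$ on $\left(\frac{1}{2},a\right)$. This reduction is the crux of the argument, and the main obstacle is carrying out the algebra cleanly and then controlling the cubic $P$.

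Finally I would analyze $P$. Its derivative $P'(x)=9x^2-4x-c$ satisfies $P'\left(\frac{1}{2}\right)=\frac{1}{4}-c>0$, and $P''(x)=18x-4>0$ for $x\ge\frac{1}{2}$, so $P'$ is positive throughout $\left[\frac{1}{2},a\right]$ and $P$ is strictly increasing there. Consequently $P$ — and with it $f'$ — changes sign at most once on $\left[\frac{1}{2},a\right]$, and any such change is necessarily from negative to positive. Therefore $f$ decreases up to a single interior minimum and increases thereafter (or is monotone on the interval), so its maximum over $\left[\frac{1}{2},a\right]$ is attained at an endpoint, giving $f(x)\le\max\left(f\left(\frac{1}{2}\right),f(a)\right)$, as asserted.
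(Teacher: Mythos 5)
Your proof is correct, and its backbone is the same as the paper's: both arguments reduce the sign of $f'$ to the sign of the cubic $3x^3-2x^2-cx+c$ (the paper's $V(x)=3x^3-2x^2+c(1-x)$ is the identical polynomial) and conclude that $f$ decreases and then increases on $\left[\frac{1}{2},a\right]$, so the maximum sits at an endpoint. The execution differs in two places, both in your favor. First, you obtain a pointwise sign identity: writing $f'$ as a difference of two positive terms and comparing squares gives that $f'(x)$ and $P(x)$ have the same sign at every point of the interval, via the factorization $\left(1-\frac{c}{x^2}\right)^2(1-x)-\left(1-x-\frac{c}{x}\right)=\frac{c}{x^4}P(x)$, which checks out. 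The paper instead squares the equation $g(x)=0$, which only identifies the zeros of $g$ with those of the cubic, and must then combine continuity with a root count of the cubic over all of $\mathbb{R}$ (signs at $0$, at $\frac{1}{2}$, and at $+\infty$) and with sign evaluations of $g$ at both ends of $\left(\frac{1}{2},\frac{1}{2}+\sqrt{\frac{1}{4}-c}\right)$. Second, your control of the cubic is purely local and simpler: $P'\left(\frac{1}{2}\right)=\frac{1}{4}-c>0$ together with $P''(x)>0$ for $x\ge\frac{1}{2}$ shows $P$ is strictly increasing there, hence changes sign at most once and only from negative to positive; this also covers transparently the degenerate possibilities where $f$ is monotone on all of $\left[\frac{1}{2},a\right]$. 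As a side benefit you avoid the paper's endpoint evaluations, which contain harmless algebraic slips (for instance $g\left(\frac{1}{2}\right)$ equals $\sqrt{1-4c}\left(\sqrt{1-4c}-1\right)$ rather than $(1-4c)\left(\sqrt{1-4c}-1\right)$, though both expressions are indeed negative), so your route is somewhat more robust while delivering the same unimodality conclusion.
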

\begin{proof}
Notice that if $x>0$, then  $1-x- \frac{c}{x}\ge 0$ is equivalent to $x^2-x+c\le 0$.
This inequality is satisfied for $x\in \left[ \frac 12 - \sqrt{\frac 14 - c}, \frac 12+\sqrt{\frac 14 - c}\right]$. 
In particular we conclude that $\sqrt{1-x- \frac{c}{x}}=0$ for $x= \frac 12 + \sqrt{\frac 14 - c}$ and $f(x)$ is well defined in the interval $ \left( \frac 12,\frac 12 + \sqrt{\frac 14 - c}\right)$ 

In order to prove the thesis we check the sign 
of the first derivative of $f(x)$. 
We have
$\displaystyle f'(x) = - \frac{1}{2\sqrt{1-x}} - \frac{1}{2\sqrt{1-x- \frac{c}{x}}}\cdot \left( -1 + \frac{c}{x^2}\right) =\frac{1}{2\sqrt{1-x- \frac{c}{x}}}\cdot \left(1- \frac{c}{x^2} - \sqrt{1- \frac{c}{x(1-x)}} \right) $.

Put $g(x)= 1- \frac{c}{x^2} - \sqrt{1- \frac{c}{x(1-x)}}$.
Clearly,  for every $x$ the sign of $f'(x)$ is the same as the sign of $g(x)$.
We have $\displaystyle g(x)=0 \Leftrightarrow 1- \frac{c}{x^2} = \sqrt{1- \frac{c}{x(1-x)}} \Leftrightarrow 1- \frac{2c}{x^2} + \frac{c^2}{x^4}=1- \frac{c}{x(1-x)} \Leftrightarrow \frac{1}{1-x}-\frac 2x + \frac{c}{x^3}=0 \Leftrightarrow 3x^3-2x^2+c(1-x)=0$. 

For $V(x)= 3x^3-2x^2+c(1-x)$ we have $V(0)=c>0$ and $V\left(\frac 12\right) = \frac 12 \left( c-\frac 14\right)<0$.
Thus $V(x)$ has three zeros: one is less than $0$, one is in the interval $\left( 0, \frac 12\right)$, and one is greater than $\frac 12$.
Hence, $g(x)$ has exactly one zero in the interval $\left( \frac 12, \frac 12 + \sqrt{\frac 14 - c}\right)$.
Denote it by $x_0$.
Due to the continuity of the function $g(x)$ in the interval $\left( \frac 12, \frac 12 + \sqrt{\frac 14 - c}\right)$, it has constant sign in the interval $\left( \frac 12,x_0\right) $ and constant sign in the interval $\left(  x_0,\frac 12 + \sqrt{\frac 14 - c} \right)$.
Notice that
$\displaystyle g\left( \frac 12\right) = 1-4c -\sqrt{1-4c} = (1-4c)(\sqrt{1-4c}-1)<0$
and 
$\displaystyle g\left(\frac 12 + \sqrt{\frac 14 - c}\right) = \sqrt{\frac 12 - \sqrt{\frac 14 - c}}>0$.
Hence, $g(x)<0$ for $x\in \left( \frac 12, x_0\right)$ and $g(x)>0$ for $x\in \left( x_0,a \right)$.
Therefore $f(x)$ is decreasing in $\left( \frac 12, x_0\right)$ and increasing in $\left( x_0,a \right)$. 
The thesis of our lemma is an immediate consequence of this statement.

\end{proof}
\begin{lem}\label{angle}
Let $a,b$ be points on a spherical circle. 
Consider any point $t$ such that $a,t, b$ lay on this circle in this order according to the positive orientation. 
The measure of the angle $\angle atb$  is the greatest if $t$ is equidistant
from $a$ and $b$.
\end{lem}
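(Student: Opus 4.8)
The plan is to express the inscribed angle as a function of a single parameter that locates $t$ on the arc, and then to find its maximum by analysing the sign of the first derivative. Denote by $o$ the center and by $r$ the radius of the spherical circle, and assume $r<\frac{\pi}{2}$ (for $r=\frac{\pi}{2}$ the circle is a great circle and the statement is degenerate). Write $k=\frac{1}{\cos r}>1$. Let $u=\angle aot$ and $v=\angle tob$ be the central angles of the arcs $at$ and $tb$; since $a,b$ are fixed, the sum $\omega=u+v$ is constant, $\omega\in(0,2\pi)$, and $t$ is equidistant from $a$ and $b$ exactly when $u=v=\frac{\omega}{2}$. Thus the task reduces to showing that, under the constraint $u+v=\omega$, the angle $\angle atb$ is largest at $u=v$.

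First I would record a formula for $\angle atb$. The triangles $oat$ and $otb$ are isosceles with legs $|oa|=|ot|=|ob|=r$; because the geodesic $to$ runs into the interior of the angle $\angle atb$ (the arc through $t$ bulges toward $o$), we have $\angle atb=\angle ota+\angle otb$. To compute $\angle ota$, drop the perpendicular from $o$ to the arc $at$; it meets $at$ at its midpoint $m$, splitting $oat$ into a right triangle $omt$ with the right angle at $m$, angle $\frac{u}{2}$ at $o$, and angle $\angle ota$ at $t$. Applying (\ref{cot}) to this right triangle gives $\cos r=\cot\frac{u}{2}\cot(\angle ota)$, whence $\tan(\angle ota)=k\cot\frac{u}{2}$, and likewise $\tan(\angle otb)=k\cot\frac{v}{2}$. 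Therefore $\angle atb=F(u)$, where $F(u)=\arctan\!\left(k\cot\frac{u}{2}\right)+\arctan\!\left(k\cot\frac{\omega-u}{2}\right)$.

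Next I would differentiate. A direct computation, after clearing the $\csc^2$ and $\cot^2$ terms and using $\sin^2\frac{u}{2}+k^2\cos^2\frac{u}{2}=\frac{(1+k^2)+(k^2-1)\cos u}{2}$, simplifies each summand of $F'$ to a single fraction, giving
\[
F'(u)=k\left(\frac{1}{(1+k^2)+(k^2-1)\cos(\omega-u)}-\frac{1}{(1+k^2)+(k^2-1)\cos u}\right).
\]
Since $k>1$, the factor $k^2-1$ is positive and both denominators are positive, so the sign of $F'(u)$ equals the sign of $\cos u-\cos(\omega-u)$. By the sum-to-product identity $\cos u-\cos(\omega-u)=-2\sin\frac{\omega}{2}\,\sin\!\left(u-\frac{\omega}{2}\right)$, and because $\sin\frac{\omega}{2}>0$, this is positive for $u<\frac{\omega}{2}$ and negative for $u>\frac{\omega}{2}$. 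Hence $F$ increases on $\left(0,\frac{\omega}{2}\right)$ and decreases on $\left(\frac{\omega}{2},\omega\right)$, so its maximum is attained at $u=\frac{\omega}{2}$, that is, when $t$ is equidistant from $a$ and $b$.

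The step I expect to be the main obstacle is the geometric reduction $\angle atb=\angle ota+\angle otb$, namely verifying that the geodesic from $t$ to the center $o$ genuinely falls inside the angle $\angle atb$ for every admissible position of $t$; the degenerate sub-case $u=\pi$, where $\angle ota=0$ because $a,o,t$ become collinear, must be checked to fit the same formula, and one must confirm that the critical point is a maximum rather than a minimum, which is precisely what the sign analysis of $F'$ delivers. The remaining work — the right-triangle application of (\ref{cot}) and the simplification of $F'$ — is routine.
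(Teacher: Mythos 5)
Your proof is correct and follows essentially the same route as the paper: both split $\angle atb$ at the center into the base angles of the isosceles triangles $oat$ and $otb$, convert these via formula (\ref{cot}) into a one-variable function of the half central angle (your $\arctan\left(k\cot x\right)$ is exactly the paper's $\textrm{arccot}(\cos \rho \tan x)$), and finish with elementary calculus — the paper by showing that summand is concave ($f''<0$) and invoking midpoint concavity, you by locating the sign change of $F'$ at the midpoint. The one point you flag as a possible obstacle (validity of the decomposition $\angle atb=\angle ota+\angle otb$ when an arc exceeds a semicircle) is passed over silently in the paper's proof as well, so there is no substantive difference between the two arguments.
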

\begin{proof}
Denote by $o$ the center of our circle and by $\rho$ its radius.
Denote by $t'$ the point on our circle laying in equal distances from $a$ and $b$, and on the same side of the great circle containing $ab$ as $t$.
Put $\alpha = \frac 12|\angle aot|$] and $\beta =\frac 12|\angle bot| =$, and let $k$ be the midpoint of the arc $at$.
Observe that $|\angle aot'|=|\angle bot'|= \alpha +\beta$.
Since $|\angle tok|=\alpha$, applying (\ref{cot}) we easily obtain that $|\angle ota| =  \textrm{arccot} ( \cos \rho \tan \alpha )$.
Analogously $|\angle otb| = \textrm{arccot} ( \cos \rho \tan \beta )$ and $|\angle ot'a|  =|\angle ot'b| = \textrm{arccot} \left( \cos \rho \tan \frac{\alpha +\beta}{2} \right)$.
Our aim is to show that  $|\angle at'b|\ge |\angle atb|$.

This inequality is equivalent to
$\displaystyle\textrm{arccot} \left( \cos \rho \tan \frac{\alpha +\beta}{2}\right)\ge\\ \frac{\textrm{arccot} ( \cos \rho \tan \alpha )+\textrm{arccot} ( \cos \rho \tan \beta )}{2}$.
In order to show this it is sufficient to show that the function $f(x) =
\textrm{arccot} ( \cos \rho \tan x )$ is concave in the interval $ \left( 0, \frac{\pi}{2}\right)$.
The reader may check that
$\displaystyle f''(x) = \frac{2\cos\rho \sin x \cos x (\cos^2\rho- 1)}{(\cos^2 x + \cos^2\rho \sin^2 x)^2}$.

It is easily seen that $f''(x)<0$ for $x\in \left( 0, \frac{\pi}{2}\right)$  and therefore the function $f(x)$ is concave in the interval $\left( 0, \frac{\pi}{2}\right)$ which completes the proof. 
\end{proof}

\begin{thm}\label{second}
Every reduced spherical body $R$ of thickness at most $\frac{\pi}{2}$ is contained in a disk of radius $\rho = \arctan \left( \sqrt{2} \cdot \tan \frac{\Delta(R)}{2}\right)$.
\end{thm}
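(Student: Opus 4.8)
The plan is to circumscribe about $R$ its (unique) circumscribed disk $D$, with centre $o$ and radius $\rho$, and to prove $\rho\le\arctan\bigl(\sqrt{2}\,\tan\tfrac{\Delta(R)}{2}\bigr)$ — the value which, by Lemma \ref{quart}, is exactly the circumradius of a quarter of disk of the same thickness. Thus the theorem really says that, among reduced bodies of a given thickness, the quarter of disk has the largest circumradius. First I would record the standard minimality property of the circumscribed disk: the contact set $\partial R\cap\partial D$ cannot lie in an open semicircle of $\partial D$, since otherwise $o$ could be displaced to shrink $D$. Hence $o\in\textrm{conv}(\partial R\cap\partial D)$, and either there are two contact points $a,b$ with $o$ the midpoint of the arc $ab$ (so that $|ab|=2\rho$), or there are three contact points $a,b,c$ with $o$ in the closed triangle $abc$. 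In the latter case the circumcentre lies in the triangle, so $abc$ is acute (the right-angled configuration being the limiting case), and $\rho$ equals its circumradius.

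Next I would bring in reducedness. Each contact point is an extreme point of $R$, and by the structure theory of reduced spherical bodies (Theorem 1 of \cite{L2} and its consequences, already used in the proof of Theorem \ref{const}) through every extreme point there passes a thickness chord of length $\Delta(R)$ realizing the minimal width. These chords, together with $a,b,c\in D$, are what tie the configuration to $\Delta(R)$. Writing the half–central angles $\alpha,\beta,\gamma$ with $\alpha+\beta+\gamma=\pi$ and using $\sin\tfrac{|bc|}{2}=\sin\alpha\,\sin\rho$ (and cyclically) together with the right–triangle relations (\ref{cos})--(\ref{cot}), I would translate the reducedness condition into one relation among $\alpha,\beta,\gamma,\rho,\Delta(R)$; together with $\alpha+\beta+\gamma=\pi$ this still leaves two degrees of freedom.

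To cut the remaining freedom down I would invoke Lemma \ref{angle}: letting a contact point slide along its arc, the angle it subtends is extremal when the point is equidistant from its neighbours, so the worst case is symmetric and the problem collapses to a one–parameter family. Parametrizing this family by a suitable quantity $x$ (a squared cosine of a half–angle, chosen so the algebra becomes rational), I expect an increasing function of $\rho$ to be bounded above by an expression of the exact shape $\sqrt{1-x}-\sqrt{1-x-\frac{c}{x}}$ of Lemma \ref{function}, with the constant there determined by $\Delta(R)$. Lemma \ref{function} then forces the maximum onto an endpoint of the admissible interval, and I would identify the two endpoints as the equilateral triangle and the quarter of disk. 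Comparing Lemma \ref{equi} with Lemma \ref{quart} shows the quarter of disk gives the larger circumradius, namely $\arctan\bigl(\sqrt{2}\,\tan\tfrac{\Delta(R)}{2}\bigr)$; the two–contact case $|ab|=2\rho$ reappears as the degenerate endpoint in which $o$ lies on the side $ab$ and the angle at $c$ is right, i.e.\ again the quarter of disk.

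The main obstacle will be the second step: converting the qualitative thickness–chord property of reduced bodies into the precise analytic relation among $\alpha,\beta,\gamma,\rho$, and then checking that after the symmetrization of Lemma \ref{angle} the circumradius is governed exactly by the profile of Lemma \ref{function} rather than by some other monotone function. Verifying that the two extreme configurations are precisely the equilateral triangle and the quarter of disk — and that no intermediate configuration can beat them — is where the computation concentrates; once this is settled, Lemmas \ref{function}, \ref{quart} and \ref{equi} combine to yield the asserted radius.
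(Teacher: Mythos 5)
Your sketch guesses the periphery correctly (the extremal configurations are the quarter of disk and the equilateral triangle, compared via Lemmas \ref{quart} and \ref{equi}, with Lemma \ref{function} pushing the maximum to these endpoints), but the core of the argument is missing, and the bridge you propose in its place does not hold. The step ``translate the reducedness condition into one relation among $\alpha,\beta,\gamma,\rho,\Delta(R)$'' is where the entire difficulty of the theorem lives, and your proposal offers no mechanism for it. What reducedness supplies (Theorem 4 of \cite{L2}) is a lune of thickness $\Delta(R)$ through each extreme point, with that point at the center of one bounding semicircle; the directions of these lunes relative to your contact triangle are not controlled, so they do not produce a single equation in the central angles. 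The paper extracts usable information from the lunes only through extra geometric facts --- the chords $e_if_i$ pairwise intersect (Lemma 2 of \cite{LMu}), and any two extreme points are at distance at least $\arccos(\cos^2\Delta(R))$ (Theorem 8 of \cite{LMu}) --- and through a case analysis your framework does not reproduce: when two of the three points lie on one side of the great circle through $e_3$ and $f_3$, the covering disk comes from $\textrm{conv}\{e_3,f_3,k,l\}$ and Lemma \ref{quart}, not from any property of the triangle $e_1e_2e_3$; and in the acute case the one-parameter family to which Lemma \ref{function} applies is produced not by symmetrization but by \emph{enlargement} --- pushing $e_2$ outward to a point $g$ with $e_2\in e_1g$, then $e_3$ outward to $j$ with $e_3\in e_1j$, until the enclosing triangle is isosceles with two heights exactly $\Delta(R)$. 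Only then does the substitution $x=\cos^2\alpha$, $c=\frac{\sin^2\Delta(R)}{4}$ put the circumradius into the form treated by Lemma \ref{function}. (The paper also needs the spherical Helly theorem to reduce to triples of extreme points in the first place; your circumscribed-disk framing avoids Helly but inherits exactly the same unsolved configuration problem for the contact points.)

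Two specific claims in your plan are, moreover, false or unjustified. First, Lemma \ref{angle} cannot serve as a symmetrization device: it compares angles subtended at points of a \emph{fixed} circle, and sliding a contact point of the circumscribed disk along that circle destroys the constraint that the point is an extreme point of a reduced body of thickness $\Delta(R)$ --- the thickness lunes do not travel with it. (In the paper that lemma is used for a much weaker purpose: to show that, in the obtuse/right case, $e_3$ lies inside the circle circumscribed about an auxiliary right isosceles triangle.) Second, ``$o$ in the closed triangle, hence $abc$ acute with the right-angled case as the limit'' is Euclidean intuition that fails on $S^2$: the contact triangle of the quarter of disk itself --- your extremal configuration --- is right-angled at the center of the dissected disk, yet its circumcenter is strictly interior (for $\Delta(R)=\frac{\pi}{2}$ this triangle is the octant, all of whose angles are right). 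Consequently the quarter of disk does not reappear as the degenerate two-contact case $|ab|=2\rho$, and your initial case split does not match the actual extremal geometry. Without the Helly reduction, the case analysis, and the enlargement construction, the plan does not close.
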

\begin{proof}
Note that every boundary point of $R$ belongs to an arc whose ends are extreme points of $R$.
Therefore if a disk contains all extreme points of $R$, then it contains all boundary points of $R$ and so the whole body $R$. 
Thus, it is sufficient to show that all extreme points of $R$ are in a disk of radius $\rho$.
Moreover, according to the spherical Helly's Theorem (see: \cite{Mo} and \cite{DGK}), it is even sufficient to show that every three extreme points of $R$ are contained in a disk of radius $\rho$.

Let  $e_1,e_2,e_3$ be any three different extreme points of $R$.
By Theorem 4 of \cite{L2}  there exist lunes $L_1, L_2, L_3$ such that $e_i$ is the center of one of the semicircles bounding $L_i$ for $i=1,2,3$. 
Denote by $f_i$ the center of the other semicircle bounding $L_i$ for $i=1,2,3$. 

\smallskip

First let us consider the case when two points from amongst $e_1,e_2,e_3$, say $e_1$ and $e_2$, lay in $L_3$ on the same side of the great circle containing $e_3$ and $f_3$. 
Since $e_1f_1$ and $e_3f_3$ intersect 
(see the proof of Lemma 2 in \cite{LMu}), the distance from $e_1$ to $e_3f_3$ is less or equal than the distance from $e_1$ to the point of intersection of $e_1f_1$ with $e_3f_3$, and so it is less or equal $\Delta (R)$.
For the same reason the distance from $e_2$ to $e_3f_3$ is at most $\Delta (R)$. 
Denote by $c$ the corner of $L_3$ laying on the same side of the great circle containing $e_3$ and $f_3$ as $e_1$ and $e_2$.
Denote by $k$ the point of $e_3c$ such that $|e_3k|=\Delta (R)$ and by $l$ the point of $f_3c$ such that $|f_3l|=\Delta (R)$.
Since $e_1,e_2,e_3\in \textrm{conv}\left\{e_3,f_3,k,l\right\}$, it is sufficient to show that $\textrm{conv}\left\{e_3,f_3,k,l\right\}$ may be covered by a disk of radius $\rho$.
The triangle $e_3f_3k$ is contained in a quarter of a disk of thickness $\Delta(R)$, so by Lemma \ref{quart} it can be covered by a disk of radius $\rho$.
What is more such a disk is unique thanks to Lemma $\ref{quart}$
For the same reason the triangle $e_3f_3l$ can be covered by a disk of radius $\rho$ and such a disk is unique.
It is easily seen that the disk is the same for both triangles. 
This disk covers $\textrm{conv}\left\{e_3,f_3,k,l\right\}$, which ends proof in this case.

\smallskip

 Assume now that for any $i,j,k$ such that $\{i,j,k\}= \{1,2,3\}$ the points $e_i$ and $e_j$ lay in $L_k$ on the different sides of the great circle containing $e_k$ and $f_k$.
Let us stay with this assumption up to the end of the proof.

\smallskip

Let us consider the case when the triangle $e_1e_2e_3$ is obtuse or right. 
Without losing the generality we can assume that the angle $\angle e_1e_3e_2$ is obtuse or right.
Let $k$ be the point on the same side of the great circle containing $e_1$ and $e_2$ as $e_3$, such that $|e_1k|=|e_2k|$ and the angle $\angle e_1ke_2$ is right.
By  Theorem 8 of \cite{LMu} $|e_1e_2|$ is at least $\arccos (\cos^2\Delta (R))$.
Therefore $|e_1k|$ and $|e_2k|$ are at most $\Delta (R)$, because otherwise by (\ref{Pyth}) the lenght $|e_1e_2|$ is greater that $\arccos (\cos^2\Delta (R))$.
Thus by Lemma \ref{quart} the circumradius of the triangle $e_1ke_2$ is at most $\arctan \left( \sqrt{2} \cdot \tan \frac{\Delta(R)}{2}\right)$.

By Lemma \ref{angle} we easily obtain that for any point $t$ from the circle circumscribed on $e_1ke_2$ laying on the same side of the great circle containing $e_1$ and $e_2$ as $k$, the angle $\angle e_1te_2$ is at least $\frac{\pi}{2}$.
Thus since $\angle e_1e_3e_2$ is obtuse or right, $e_3$ must lay inside this circumscribed circle, which ends the proof in this case.

\smallskip

The last case is when the triangle $e_1e_2e_3$ is  acute. 
Clearly, all heights of this triangle have lengths less that $\Delta (R)$.
Let $g$ be the point closest to $e_2$ such that $e_2$ is on the arc $e_1g$ and $g$ satisfies at least one of following conditions: $\angle e_1e_3g$ is right or $\textrm{dist}(g, e_1e_3)=\Delta (R)$ or $\textrm{dist}(e_1, ge_3)=\Delta (R)$.
If $\angle e_1e_3g$ is right, then $|e_1e_3|$ and $|e_3g|$ are at most $\Delta (R)$. 
Therefore the triangle $e_1ge_3$ is contained in a right triangle with legs of length $\Delta (R)$, and so in the quarter of a disk of thickness $\Delta (R)$.
Hence, by Lemma \ref{quart}, $e_1,e_2,e_3$ can be covered by a disk of radius $\rho$.

Otherwise, the angle $\angle e_1e_3g$ is acute. 
Since $|\angle e_1ge_3|<|\angle e_1e_2e_3|$, the triangle $e_1ge_3$ is acute.
One height of this triangle, say this from the vertex $g$, is of length $\Delta (R)$.
Moreover, the height of this triangle from vertex $e_1$ is of length at most $\Delta (R)$.
Let $j$ be the point closest to $e_3$ such that $e_3$ is on the arc $e_1j$ and $j$ satisfies at least one of following conditions: $\angle e_1gj$ is right or $\textrm{dist}(j, e_1g)=\Delta (R)$ or $\textrm{dist}(e_1, gj)=\Delta (R)$.
If $\angle e_1gj$ is right (which by the way is possible only if $\Delta (R) =\frac{\pi}{2}$), then we apply the same argument as in 
the preceding case.
Otherwise the triangle $e_1gj$ is acute and has two heights of length $\Delta (R)$, say from vertices $g,j$.
Observe that from the construction of this triangle we obtain that the third height is of length at most $\Delta (R)$.
Since the points $e_1,e_2,e_3$ are contained in the triangle $e_1gj$, it remains to prove that this triangle may be covered by 
a disk of radius $\rho$.

\smallskip

Observe that since the triangle $e_1gj$ has two equal heights, it is an isosceles triangle.
Denote the angle at $e_1$ by $2\alpha$, the center of the circle circumscribed on $e_1gj$ by $o$ and the radius of this circle by $\sigma$.
Denote also the point at the middle of $je_1$ by $k$ and the point on $e_1g$ closest to $j$ by $h$.
Clearly, the triangles $hje_1$ and $oke_1$ are right.
Put $B=|je_1|$ and notice that $|ke_1|=\frac B2$.
Since $e_1gj$ is isosceles, we have $|\angle oe_1k|= \alpha$.
The length of the arc $hj$ is $\Delta (R)$.

From (\ref{cos}) for the triangle $oke_1$ we obtain $\tan \frac B2 =\cos \alpha \tan \sigma$. 
Formula (\ref{sin}) for the triangle $hje_1$ gives $\sin \Delta (R) = \sin 2\alpha \sin B$ and in different form 
$\sin B\cos \alpha = \frac{\sin \Delta (R)}{2\sin\alpha}$.
Using formulas from last two sentences and the trigonometric formula 
 $\tan \frac B2 = \frac{1-\cos B}{\sin B}$, we obtain
$\displaystyle\tan \sigma = \frac{ \tan \frac B2}{\cos \alpha} = \frac{1-\cos B}{\sin B\cos \alpha}= \frac{1-\cos B}{\frac{\sin\Delta (R)}{2\sin \alpha}} = 
\frac{2\sin\alpha \left( 1 - \sqrt{1-\sin^2B}\right)}{\sin\Delta (R)}= \frac{2\sin\alpha \left( 1 - \sqrt{1-\frac{\sin^2\Delta (R)}{\sin^22\alpha}}\right)}{\sin \Delta (R)}=
\frac{2 \left( \sin\alpha - \sqrt{\sin^2\alpha-\frac{\sin^2\alpha\sin^2\Delta (R)}{\sin^22\alpha}}\right)}{\sin \Delta (R)}  =
\frac{2 \left( \sqrt{1-\cos^2\alpha} - \sqrt{1- \cos^2\alpha-\frac{\sin^2\Delta (R)}{4\cos^2\alpha}}\right)}{\sin \Delta (R)}$.
The greatest possible value of $\alpha$ is $\frac{\pi}{4}$ and the smallest is this value for which the third height is of length $\Delta(R)$.
In the first case the triangle $e_1jg$ is right and by Lemma \ref{quart} we have $\tan \sigma_1 = \sqrt{2} \cdot \tan \frac{\Delta(R)}{2}$. 
By the formula $\tan \Delta (R) = \frac{2 \tan \frac{\Delta (R)}{2}}{1- \tan^2\frac {\Delta (R)}{2}}$ we easily obtain that in this case
\[\tan \sigma_1 = \sqrt{2} \cdot \frac{\sqrt{4+4\tan^2 \Delta (R)}-2}{2\tan\Delta (R)} =\frac{\sqrt{2+2\tan^2 \Delta (R)}-\sqrt{2}}{\tan\Delta (R)} \]
In the second case the triangle $e_1jg$ is equilateral and by Lemma \ref{equi} in this case $\tan \sigma _2= \frac{\sqrt{9+8\tan^2\Delta(R)}-3}{2\tan \Delta (R)}$.
For shortness denote $\tan \Delta (R)=t$. 
We have
$\displaystyle\tan\sigma_2 = \frac{\sqrt{9+8t^2}-3}{2t} = \frac{1}{2t}\left(\sqrt{9+8t^2}-\sqrt{8+8t^2} -3+ \sqrt{8+8t^2}\right) =\frac{1}{2t}\left(\frac{1}{\sqrt{9+8t^2}+\sqrt{8+8t^2}} -3+ \sqrt{8+8t^2}\right) <\frac{1}{2t}\left(\frac{1}{\sqrt{9}+\sqrt{8}} -3+ \sqrt{8+8t^2}\right)= \frac{1}{2t}\left(3- 2\sqrt{2} -3+ \sqrt{8+8t^2}\right) = \frac{\sqrt{2+2t^2}-\sqrt{2}}{t}=\tan \sigma_1$.

If we put $\cos^2\alpha =x$ and $\frac{\sin^2\Delta (R)}{4}=c$, then by Lemma \ref{function} we conclude that $\tan\sigma$ has the greatest value $\max \left( \tan \sigma_1, \tan \sigma_2 \right) = \sqrt{2} \cdot \tan \frac{\Delta(R)}{2}$, from which we obtain the thesis of theorem.  

\end{proof}

Note that by Lemma \ref{quart} in general we can not improve the estimate from Theorem \ref{second}.


\begin{thebibliography}{999}

\bibitem {DGK} L. Danzer, B. Gr\"unbaum, V. Klee, {\it Helly's theorem and its relatives}, in Proc. of Symp. in Pure Math. vol. VII, Convexity, (1963), pp. 99--180.  

\bibitem {De} B. V. Dekster, {\it The Jung theorem for spherical and hyperbolic spaces}, Acta Math. Hungar.,  {\bf 67 (4)} (1995), 315--331

\bibitem{HN} H. Han, T. Nishimura, {\it Self-dual Wulff shapes and spherical convex bodies of constant width $\frac{\pi}{2}$}, J. Math. Soc. Japan, vol. \textbf{69}, Number 4 (2017), 1475-1484.

\bibitem{L3} M. Lassak, {\it On the smallest disk containing a planar reduced convex body}, Arch. Math. \textbf{80} (2003), 553--560.

\bibitem{L2} M. Lassak, {\it Width of spherical convex bodies}, Aequationes Math., 
 \textbf{89} (2015), no.  3, 555--567. 

\bibitem{LM} M. Lassak, H. Martini, {\it Reduced convex bodies in Euclidean space -- a survey}, Expositiones Math. {\bf 29} (2011), 204--219. 

\bibitem{LMu} M. Lassak, M. Musielak, {\it Reduced spherical convex bodies}, Bulletin of the Polish Academy of Sciences, to appear (see also arXiv:1607.00132v1).

\bibitem{LMu2} M. Lassak, M. Musielak, {\it Spherical bodies of constant width}, Aequationes Math., to appear (see also:  arXiv:1801.01161v1).

\bibitem{Ma} L. A. Masal'tsev, {\it Incidence theorems in spaces of constant curvature}, J. Math. Sci., Vol. \textbf{72} (1994), 3201--3206

\bibitem {Mo} J. Moln\'ar, {\it \"Uber einen \"Ubertragung des Hellyschen Satzes in sph\"arische R\"aume}, Acta Math. Acad. Sci. Hungary {\bf 8} (1957), 315--318

\bibitem{Mur} D.A. Murray, {\it Spherical Trigonometry}, Longmans Green and CO, London, Bombay and Calcuta, (1900).  

\bibitem{NS} T. Nishimura, Y. Sakemi, {\it Topological aspect of Wulff shapes}, J. Math. Soc. Japan., \textbf{66} (2014), 89-–109.



\end{thebibliography}
\end{document}